\newtheorem{theorem}{Theorem}[section]
\newtheorem{example}{Example}[section]
\newtheorem{problem}{Problem}[section]
\newtheorem{proposition}{Proposition}[section]
\newenvironment{proof}[1][Proof]{\noindent\textbf{#1.} }{\ \rule{0.5em}{0.5em}}
\begin{document}

\author{Faruk Abi-Khuzam \\
Department of Mathematics, American University of Beirut,\\
Beirut, Lebanon}
\title{Elementary Discrete and Continuous Interplay}
\maketitle

\begin{abstract}
We illustrate the interplay between certain discrete and continuous
problems, by presenting a method for the study of the asymptotics of a
divergent sequence, through consideration of the asymptotics of its
continuous analogue.
\end{abstract}

\section{\protect\bigskip Introduction}

The alternating harmonic series $\sum_{k=1}^{\infty }\frac{(-1)^{k+1}}{k}$
is convergent and its sum is $\ln 2$. The non-linear difference equation%
\[
a_{n+1}=a_{n}+\frac{1}{a_{n}},a_{1}=1, 
\]%
has a divergent solution which grows like $\sqrt{2n}$\ $\ $as $n\rightarrow
\infty $. The first fact is common knowledge usually proved by going to the
Maclaurin expansion of $\ln (1+x)$ and then laboring a bit to show that it
remains valid for $x=1$. The second fact would probably strike most readers
as nothing more than a curiosity, perhaps obtained by some clever
manipulation. Both problems belong in a discrete setting, though their
answers derive somehow from the continuous setting. But this is no strange
thing, since the continuous is constructed from the discrete through the
completeness property. What is not clear is that these seemingly disparate
mathematical facts, can be viewed on a common platform, and their solutions
can be obtained from a method that exploits what we shall call here the
discrete-continuous interplay. The method, which is the subject of this
article, is elementary, requiring no prerequisite beyond calculus and a
little knowledge of differential equations. It can be used in any course
where the theory of convergence is given, and allows both instructor and
student to go beyond the dichotomy of convergence-divergence of a given
sequence or series. Specifically, it will make it possible to enrich the
study of convergence theory, by opening the possibility of treating novel,
and useful, problems by a method that, from the start, suggests a line of
attack and a possible answer. The idea is that a problem in the discrete
setting is translated, by use of a simple dictionary, into a problem in the
continuous setting, where a solution is sought. If a solution of the
continuous problem is found, it will suggest an answer and a possible
approach for the solution of the discrete problem.

\section{\protect\bigskip The integral test}

One of the earliest exposures to the interplay between the discrete and the
continuous occurs in the integral test for convergence of series, familiar
to all beginners in calculus. The integral test is based on the two
inequalities%
\[
\int_{1}^{n+1}f(t)dt<f(1)+f(2)+\cdot \cdot \cdot
+f(n)<f(1)+\int_{1}^{n}f(t)dt, 
\]%
which are valid for any function $f$ defined on $[1,\infty ),$ positive, and
non-increasing there. The middle sum is the $n^{th}$ partial sum $S_{n}$ of
the series $\sum_{k=1}^{\infty }f(k)$, while the integral on the right is
the "continuous analogue" of $S_{n}$. These inequalities imply that the
infinite series $\sum_{k=1}^{\infty }f(k)$, converges if and only if the
improper integral $\int_{1}^{\infty }f(t)dt$ converges. But as we shall see
readily, these inequalities supply one of many bridges between the discrete
and continuous. As a warm up for what is to come, we shall show how it can
be employed to find the sum of a convergent series from the divergence of
another. To illustrate, consider the case where $f(t)=\frac{1}{t}$. Then we
have 
\[
\int_{1}^{n+1}\frac{1}{t}dt<1+\frac{1}{2}+\cdot \cdot \cdot +\frac{1}{n}%
<1+\int_{1}^{n}\frac{1}{t}dt, 
\]%
and we conclude that the harmonic series $\sum_{k=1}^{\infty }\frac{1}{k}$
is divergent. But it is crucial not to be content with this conclusion.
Indeed, if we introduce the sequence $H_{n}$ defined by 
\[
H_{n}=1+\frac{1}{2}+\cdot \cdot \cdot +\frac{1}{n}-\int_{1}^{n}\frac{1}{t}%
dt, 
\]%
then $H_{n}$ is a positive, monotone decreasing sequence, and hence
convergent. Its limit is usually denoted by $\gamma $, and called the
Euler-Mascheroni constant. \ Having this new information at hand, and
noticing that 
\[
\sum_{k=1}^{2n}\frac{(-1)^{k+1}}{k}=H_{2n}-H_{n}+\int_{1}^{2}\frac{1}{t}dt, 
\]%
we are led immediately, and without any appeal to a Maclaurin series, to the
sum of the alternating series 
\[
\sum_{k=1}^{\infty }\frac{(-1)^{k+1}}{k}=\int_{1}^{2}\frac{1}{t}dt=:\ln 2. 
\]%
So the divergence of the harmonic series, as seen through the two
inequalities between the discrete and the continuous, led to a convergent
sequence, which in turn led to the convergence and evaluation of the sum of
another infinite series. The value of $\ln 2$ for the sum of the series,
turned out to be independent of the limit of the sequence that helped find
it.

The point made in the previous example deserves another illustration, so let
us consider the series $\sum_{k=1}^{\infty }\frac{(-1)^{k+1}\ln k}{k}$. \
The relevant function here is $\frac{\ln t}{t}$ which is decreasing on $%
[e,\infty )$, so that the sequence $T_{n}$ defined by 
\[
T_{n}=\frac{\ln 1}{1}+\frac{\ln 2}{2}+\cdot \cdot \cdot +\frac{\ln n}{n}-%
\frac{1}{2}\ln ^{2}n 
\]%
is convergent. Once again we compute that 
\[
\sum_{k=1}^{2n}\frac{(-1)^{k+1}\ln k}{k}=\sum_{k=1}^{2n}\frac{\ln k}{k}%
-\sum_{k=1}^{n}\frac{\ln 2k}{k} 
\]%
\[
=T_{2n}+\frac{1}{2}\ln ^{2}2n-\ln 2\left( H_{n}+\ln n\right) -T_{n}-\frac{1}{%
2}\ln ^{2}n, 
\]%
and it follows, painlessly, that 
\[
\sum_{k=1}^{\infty }\frac{(-1)^{k+1}\ln k}{k}=\frac{1}{2}\ln ^{2}2-\gamma
\ln 2. 
\]%
Observe that the sum depends on the limit of the sequence $H_{n}$, but not
on that of $T_{n}$. \ The reader can now, if he so wishes, evaluate the sums 
$\sum_{k=1}^{\infty }\frac{(-1)^{k+1}\ln ^{p}k}{k}$, where $p$ is a
non-negative integer. He will be led to the introduction of constants $%
\gamma _{\alpha }$ defined as limits%
\[
\gamma _{\alpha }=\lim_{n\rightarrow \infty }\frac{\ln ^{\alpha }1}{1}+\frac{%
\ln ^{\alpha }2}{2}+\cdot \cdot \cdot +\frac{\ln ^{\alpha }n}{n}-\frac{1}{%
\alpha +1}\ln ^{\alpha +1}n, 
\]%
and will be gratified to know they are called Stieltjes constants, appear as
coefficients in the Laurent expansion of the zeta function of Riemann about
the pole $1$, and continue to be the subject of research \cite{Bla}. But
they will not be considered any further in this article.

It should be clear that the previous discussion can be summed up in the
following simple proposition whose proof is left to the reader.

\begin{proposition}
Let $f$ be a positive function defined on $[1,\infty )$, and monotone
non-increasing on $(a,\infty )$, where $a\geq 1$. Then the sequences 
\[
A_{n}=\sum_{k=1}^{n}f(k)-\int_{1}^{n}f(t)dt,B_{n}=\sum_{k=1}^{n}f(2k)-%
\int_{1}^{n}f(2t)dt 
\]%
are convergent. If , in addition, the series $\sum_{k=1}^{\infty }f(k)$ is
divergent, and $\lim_{k\rightarrow \infty }f(k)=0$, then the series $%
\sum_{k=1}^{\infty }(-1)^{k+1}f(k)$ is convergent, and its sum is 
\[
\sum_{k=1}^{\infty }(-1)^{k+1}f(k)=L+\int_{1}^{2}f(t)dt. 
\]%
where $L=\lim_{n\rightarrow \infty }(A_{2n}-2B_{n})=\lim_{n\rightarrow
\infty }(A_{n}-2B_{n}).$
\end{proposition}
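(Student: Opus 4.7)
The plan is to handle the two convergence claims first and then assemble the identity for the alternating series from them by a bookkeeping computation.

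First I would show that $A_n$ converges. Since $f$ is non-increasing on $(a,\infty)$, for every integer $n$ greater than $a$ we have $f(n)\le f(t)\le f(n-1)$ on $[n-1,n]$, so the increment
\[
A_n-A_{n-1}=f(n)-\int_{n-1}^{n}f(t)\,dt
\]
is non-positive; hence $A_n$ is eventually monotone non-increasing. For a lower bound, use the other half of the same inequality: $f(k)\ge\int_{k}^{k+1}f(t)\,dt$ for $k>a$, which, summed, gives $\sum_{k=N}^{n}f(k)\ge\int_{N}^{n+1}f(t)\,dt$ for any fixed $N>a$. Combined with the finitely many terms $k\le N$, this bounds $A_n$ from below. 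So $A_n$ is convergent. The argument for $B_n$ is identical after noting that $g(t):=f(2t)$ is positive on $[1,\infty)$ and non-increasing on $(a/2,\infty)$; one can apply the same reasoning to $g$.

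Next I would manipulate the even partial sum of the alternating series. Splitting into odd and even indexed terms,
\[
\sum_{k=1}^{2n}(-1)^{k+1}f(k)=\sum_{k=1}^{2n}f(k)-2\sum_{k=1}^{n}f(2k).
\]
Now substitute from the definitions of $A_{2n}$ and $B_n$, and use the change of variable $u=2t$ to rewrite $2\int_{1}^{n}f(2t)\,dt=\int_{2}^{2n}f(u)\,du$. The integrals from $1$ to $2n$ and from $2$ to $2n$ subtract to leave exactly $\int_{1}^{2}f(t)\,dt$, yielding the key identity
\[
\sum_{k=1}^{2n}(-1)^{k+1}f(k)=(A_{2n}-2B_n)+\int_{1}^{2}f(t)\,dt.
\]
Taking $n\to\infty$ and calling the limit $L=\lim(A_{2n}-2B_n)$ (which exists by the first step), the even partial sums converge to $L+\int_{1}^{2}f(t)\,dt$. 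Under the hypothesis $f(k)\to 0$, the odd partial sums differ by $f(2n+1)\to 0$, so the series converges to the same value.

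Finally, I would note that $\lim(A_n-2B_n)=\lim(A_{2n}-2B_n)$ follows because the convergence of $A_n$ forces $A_{2n}-A_n\to 0$. The only step requiring real care is the convergence of $A_n$ and $B_n$: the hypothesis places monotonicity only beyond $a$, so one must isolate the finitely many early terms and bound them separately rather than apply the classical integral test verbatim. Once that is handled, the rest is telescoping and a change of variable.
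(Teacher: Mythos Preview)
Your proof is correct and follows exactly the line the paper intends: the proposition is stated with the remark that ``the previous discussion can be summed up in the following simple proposition whose proof is left to the reader,'' and that previous discussion carries out precisely your computation in the special cases $f(t)=1/t$ and $f(t)=(\ln t)/t$ --- namely, establish convergence of the integral-test remainder sequence, split the even partial sum as $\sum_{k=1}^{2n}f(k)-2\sum_{k=1}^{n}f(2k)$, and reduce the integral difference to $\int_{1}^{2}f(t)\,dt$ via the substitution $u=2t$. Your care in isolating the finitely many terms before monotonicity kicks in, and your observation that the divergence hypothesis on $\sum f(k)$ is not actually used in the argument, are both appropriate.
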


\section{From discrete to continuous}

The integral $\int_{1}^{n}f(t)dt$ is here proposed as the "continuous
analogue" of the "discrete" sum $\sum_{k=1}^{n}f(k)$, but the two are not
equal. However, see \cite{BoPo} for the case $n=\infty .$In addition, we
propose, the derivative $f^{\prime }(k)$ as the continuous analogue of the
difference $f(k+1)-f(k)$. The first step in the method that we propose here
is to make a translation of the problem in the discrete setting into a
continuous problem, using the dictionary as in the following table:%
\[
\begin{array}{ccc}
Item & Discrete & Continuous \\ 
variable & n & t \\ 
function & a_{n} & f(t) \\ 
derivative & a_{n+1}-a_{n} & f^{\prime }(t) \\ 
integral & \sum_{k=1}^{n}a_{k} & \int_{1}^{n}f(t)dt%
\end{array}%
\cdot 
\]

No claim is made as to how "faithful" this translation would be. For
example, we know that $a_{n}$ $\rightarrow 0$, if $\sum_{k=1}^{n}a_{k}$
converges, but $f$ may not have a limit at $\infty $ if the integral $%
\int_{1}^{\infty }f(t)dt$ converges. If $a_{n}\rightarrow a$, as $%
n\rightarrow \infty $, then the difference $a_{n+1}-a_{n}\rightarrow 0$, but
this is not true of a function and its derivative. Neverthless, use of this
dictionary back and forth, will invariably lead to interesting novel
problems. For example, it is known that for a function $f$, differentiable
on $(a,\infty )$, the condition 
\[
\alpha >0,f^{\prime }(t)+\alpha f(t)\rightarrow 0,t\rightarrow \infty , 
\]%
implies that $f(t)\rightarrow 0$, as $t\rightarrow \infty .$ A translation
of this gives the following problem on a real sequence $a_{n}$: suppose that 
\[
a_{n+1}-a_{n}+\alpha a_{n}=a_{n+1}+\beta a_{n}\rightarrow 0,n\rightarrow
\infty , 
\]%
does it follow that $a_{n}\rightarrow 0$, as $n\rightarrow \infty ?$

Once a translation is made from the discrete to the continuous, the task
shifts to a search for a solution of the continuous problem, which might
involve a simple differential equation with a readily computable solution,
or a very difficult integro-differential equation, or just a brief limit
statement . But in solving any one of these, if that is possible, due
attention must be made to the steps taken, in order to be able to translate
them, usually in reverse order, back into the discrete case. Thus when $%
f^{\prime }$ is integrated to get $f$, the corresponding step on the
sequence would be to sum the differences $a_{n+1}-a_{n}$ .This means that
some , sometimes not so simple, maneuvering must be used in order to obtain
a proof in the discrete case. So no royal road is being paved here. The most
important aspect of the method remains in its ability to suggest a possible
line of attack and a possible answer. In addition, its range of
applicability is reasonably wide. We shall illustrate various aspects of
this method throughout the rest of this article, supplying enough details to
ellucidate its usefulness, as well as the subtlties involved in the process.

\ Consider the result stated in the introduction about the asymptotics of
the sequence $a_{n}$ defined by 
\[
a_{n+1}=a_{n}+\frac{1}{a_{n}},a_{1}=1. 
\]%
This is an example of a first order difference equation. Were it linear, it
would have been possible to solve for $a_{n}$ explicitly. But it is a
non-linear difference equation, and so,instead, we search for the asymptotic
behaviour of $a_{n}$ as $n\rightarrow \infty .$ This will give us the order
of growth of $a_{n}$ . It will tell us whether $a_{n}$ grows like some power 
$n^{\alpha }$, or like $\ln n,$ etc... Using the dictionary, with $%
a_{n}=f(t),a_{n+1}-a_{n}=f^{\prime }(t)$, and ignoring for the moment the
initial condition , the corresponding continuous problem involves a positive
function satisfying the differential equation 
\[
f^{\prime }(t)=\frac{1}{f(t)}, 
\]%
which is easily solved to give $f^{2}(t)=2t$ as one particular solution. So
we guess, from the "continuous" answer, that the sequence $a_{n}$ possibly
satisfies $a_{n}\sim \sqrt{2n}$ as $n\rightarrow \infty .$ We also see, from
the continuous solution, that we should look at $a_{n}^{2}.$ With these two
insights at hand we can proceed to a proof. Squaring we first obtain%
\[
a_{n+1}^{2}-a_{n}^{2}=2+\frac{1}{a_{n}^{2}}\geq 2,a_{2}^{2}-a_{1}^{2}=3. 
\]%
Summing the first inequalities from $2$ to $n$, gives us the first key
inequality $a_{n}^{2}\geq 2n$, for $n\geq 2$. But now this inequality gives
us that 
\[
a_{n+1}^{2}-a_{n}^{2}=2+\frac{1}{a_{n}^{2}}\leq 2+\frac{1}{2n},n\geq 2, 
\]%
and again summation gives us the second key inequality%
\[
a_{n+1}^{2}\leq 2(n+1)+\frac{1}{2}\ln n,n\geq 2\text{.} 
\]%
Putting the two together we obtain that

\[
\lim_{n\rightarrow \infty }\frac{a_{n}^{2}}{2n}=1,\lim_{n\rightarrow \infty }%
\frac{a_{n}}{\sqrt{2n}}=1. 
\]%
If instead of the term $\frac{1}{a_{n}}$ we had $\frac{1}{a_{n}^{2}}$, we
would get $f^{3}(t)=3t$, guess that $a_{n}\sim \sqrt[3]{3n}$, and start our
proof by looking at $a_{n+1}^{3}-a_{n}^{3}$. In fact we can replace the term 
$\frac{1}{a_{n}}$ by $\frac{1}{f(a_{n})}$ thereby obtaining the following

\begin{theorem}
Let $f$ be a positive non-decreasing continuous function defined on $%
[0,\infty )$. Let $a_{n}$ be a solution of 
\[
a_{n+1}-a_{n}=\frac{1}{f(a_{n})} 
\]%
satisfying $a_{1}>0$. Put 
\[
F(x)=1+\int_{0}^{x}f(t)dt,\;x\geq 0. 
\]%
Then 
\[
a_{n}\;\sim \;F^{-1}(n)\text{ \ as }n\rightarrow \infty . 
\]
\end{theorem}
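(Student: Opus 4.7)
The plan is to mimic the warm-up argument for $a_{n+1}=a_n+1/a_n$, with the function $F$ playing the role that squaring played there. The continuous analogue is the separable ODE $g'(t)=1/f(g(t))$, i.e.\ $f(g)\,dg=dt$, which integrates to $F(g(t))=t+C$ and hence $g(t)=F^{-1}(t+C)\sim F^{-1}(t)$. This both predicts the answer $a_n\sim F^{-1}(n)$ and tells me that the right quantity to track in the discrete setting is $F(a_n)$.

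First I would apply $F$ to the recurrence. By the fundamental theorem of calculus, the monotonicity of $f$ on $[a_n,a_{n+1}]$, and the identity $a_{n+1}-a_n=1/f(a_n)$,
\[
1\;=\;f(a_n)(a_{n+1}-a_n)\;\le\;F(a_{n+1})-F(a_n)\;\le\;f(a_{n+1})(a_{n+1}-a_n)\;=\;\frac{f(a_{n+1})}{f(a_n)}.
\]
Telescoping the lower bound yields $F(a_n)\ge F(a_1)+n-1$, the exact analogue of the warm-up's first key inequality $a_n^2\ge 2n$; inverting gives $a_n\ge F^{-1}(n+F(a_1)-1)$, and in particular $a_n\to\infty$ (a bounded $a_n$ would make $1/f(a_n)$ bounded away from zero, contradicting the convergence that boundedness forces on the increasing sequence $(a_n)$).

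For the matching upper bound I would show $f(a_{n+1})/f(a_n)\to 1$. If $f$ is bounded, $f(a_n)$ tends to a finite positive limit and the ratio tends to $1$ trivially. If $f(a_n)\to\infty$, the spacing $a_{n+1}-a_n=1/f(a_n)$ tends to $0$; combined with the continuity of $f$ and the already-proved $a_n\to\infty$, this is to drive the ratio to $1$. A Ces\`aro argument applied to the upper sandwich then produces $(F(a_n)-F(a_1))/(n-1)\to 1$, so combined with the lower bound we obtain $F(a_n)/n\to 1$, and inverting via the monotonicity of $F$ delivers $a_n/F^{-1}(n)\to 1$.

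The main obstacle is the upper-bound step. The crude telescoping $\sum_{k=1}^{n-1}\bigl(f(a_{k+1})/f(a_k)-1\bigr)\le (f(a_n)-f(a_1))/f(a_1)$ gives only $F(a_n)=O(n)$ with a constant strictly greater than $1$ whenever $f$ grows as fast as $F$: already for $f(x)=e^x$, where $F(x)=e^x$, that bound leaves a factor of $e/(e-1)$ it cannot remove. The delicate point is therefore to extract the sharper $f(a_{n+1})/f(a_n)=1+o(1)$ from the vanishing spacing together with the continuity of $f$, and to verify that this control is enough to lock the Ces\`aro average at $1$; the final inversion from $F(a_n)\sim n$ to $a_n\sim F^{-1}(n)$ also tacitly relies on mild regularity of $F^{-1}$ at infinity.
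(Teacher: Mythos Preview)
Your lower bound is exactly the paper's: $F(a_{n+1})-F(a_n)\ge 1$ telescopes to $a_n\ge F^{-1}(n)$. The gap is the upper bound. The step ``continuity of $f$ plus $a_{n+1}-a_n\to 0$ forces $f(a_{n+1})/f(a_n)\to 1$'' is not valid: continuity gives no uniform control as $a_n\to\infty$. For a staircase $f$ that equals $2^k$ on a plateau of width roughly $1$ at height $k$ and then ramps to $2^{k+1}$ over an interval of width $\le 2^{-k}$, one step of size $2^{-k}$ can carry $a_n$ from the plateau across the entire ramp, producing $f(a_{n+1})/f(a_n)$ close to $2$ once per level, for infinitely many $n$. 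So the termwise limit you feed into Ces\`aro need not exist, and your argument stalls precisely at the ``delicate point'' you flagged. (Even if one could rescue $F(a_n)/n\to 1$ by a different route, you would still owe the inversion step, which, as you note, is not automatic.)

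The paper avoids both difficulties by working in $a$-space rather than $F$-space for the upper bound. Having proved $a_n\ge F^{-1}(n)$, it feeds this back into the \emph{original} recurrence: since $f$ is non-decreasing, $f(a_n)\ge f(F^{-1}(n))=:g(n)$, so $a_{n+1}-a_n\le 1/g(n)$. Summing and comparing with the integral gives
\[
a_{n+1}\le a_1+\frac{1}{g(1)}+\int_1^n\frac{dx}{g(x)}\,,
\]
and the key identity $(F^{-1})'(x)=1/f(F^{-1}(x))=1/g(x)$ evaluates that integral as $F^{-1}(n)-F^{-1}(1)$. Thus $F^{-1}(n)\le a_n\le F^{-1}(n)+c$ for a fixed constant $c$, which immediately yields $a_n\sim F^{-1}(n)$ since $F^{-1}(n)\to\infty$. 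No ratio estimate, no Ces\`aro averaging, and no inversion of an asymptotic are needed.
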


\begin{proof}
The sequence $\{a_{n}\}$ is positive and increasing. Since $f$ is
non-decreasing, 
\[
F(a_{n+1})-F(a_{n})=\int_{a_{n}}^{a_{n+1}}f(t)dt\geq
f(a_{n})(a_{n+1}-a_{n})=1, 
\]%
for $n=1,2,3,...\;$. Summing from $1$ to $n$ and noting that $F(a_{1})>1$,
we obtain $F(a_{n+1})\geq n+1$ and so $a_{n}\geq F^{-1}(n)$, since the
inverse function $F^{-1}$ is well-defined. But then $f(a_{n})\geq f\circ
F^{-1}(n)$ because $f$ is non-decreasing and this may be incorporated in the
original difference equation to give 
\[
a_{n+1}-a_{n}\leq \frac{1}{g(n)}\text{ where }g=f\circ F^{-1},\text{ and }%
n=1,2,3,...\;. 
\]%
Once again, if we sum from $1$ to $n$ we obtain 
\[
a_{n+1}-a_{1}\leq \sum_{k=1}^{n}\frac{1}{g(k)}\cdot 
\]%
The composite function $g=f\circ F^{-1}$ is positive,non-decreasing, and
continuous, and $F^{-1}$ is differentiable, so the change of variable
formula \cite{Rudin} \ may be used to get the inequality 
\[
\sum_{k=1}^{n}\frac{1}{g(k)}\leq \frac{1}{g(1)}+\int_{1}^{n}\frac{dx}{g(x)}=%
\frac{1}{g(1)}+F^{-1}(n)-F^{-1}(1). 
\]%
We have $g(1)=f(F^{-1}(1))=f(0)>0$, and so 
\[
a_{n+1}\leq a_{1}+\frac{1}{g(1)}+F^{-1}(n)-F^{-1}(1)\leq c+F^{-1}(n+1) 
\]%
for a constant $c$ independent of $n$.We have thus shown that $F^{-1}(n)\leq
a_{n}\leq F^{-1}(n)+c$, and the result follows since $F^{-1}(n)$ $%
\rightarrow \infty $ as $n\rightarrow \infty $.
\end{proof}

\begin{example}
If $a_{n+1}-a_{n}=a_{n}^{-\alpha },$ where $a_{1},\alpha >0$, then $%
a_{n}\;\sim \;(\alpha +1)^{\frac{1}{\alpha +1}}n^{\frac{1}{\alpha +1}}\,$ as 
$n\rightarrow \infty .$
\end{example}

\begin{example}
If $a_{n+1}-a_{n}=\exp (-a_{n}),$ where $a_{1}>0,$ then $a_{n}\;\sim \;\ln n$
as $n\rightarrow \infty .$
\end{example}

\section{Extensions}

Returning to the solved example of the previous section, if instead of the
term \ $\frac{1}{a_{n}}$ we had $\frac{1}{2na_{n}}$, we would get $%
f^{2}(t)=\ln t,$ guess that $a_{n}\sim \sqrt{\ln n}$ , and again start from
the square of the given sequence. Thus we see that various generalizations
of Theorem $1$ are readily obtained. Here is a simple one whose proof is
left to the reader.

\textit{Let }$f$ and $g$ \textit{be positive, non-decreasing, and continuous
functions defined on }$[0,\infty )$.\textit{\ Let }$a_{n}$\textit{\ be a
solution of }%
\[
a_{n+1}-a_{n}=\frac{1}{f(a_{n})g(n)} 
\]%
\textit{satisfying }$a_{1}>0$\textit{. Put }%
\[
F(x)=1+\int_{0}^{x}f(t)dt,\;x\geq 0, 
\]%
\textit{and assume that }$\int_{1}^{n}\frac{1}{g(t)}dt\rightarrow \infty $%
\textit{\ as }$n\rightarrow \infty .$\textit{Then }%
\[
a_{n}\;\sim \;F^{-1}(\int_{1}^{n}\frac{1}{g(t)}dt)\text{ \ as }n\rightarrow
\infty . 
\]%
We emphasize that our aim here is not complete generality. In fact it is
advisable to treat a given problem on its own merits, as consideration of
the following example, where $g$ is actually decreasing, will reveal.

\begin{example}
If the sequence $a_{n}$ is defined by the recurrence 
\[
a_{n+1}-a_{n}=\frac{n^{\alpha }}{3a_{n}^{2}},a_{1}>0,\alpha >0, 
\]%
then%
\[
a_{n}\sim \frac{1}{\sqrt[3]{\alpha +1}}n^{\frac{\alpha +1}{3}},n\rightarrow
\infty . 
\]
\end{example}

\section{Second order differences}

The results in the previous section involved a sequence and its first
differences. In other words we considered the problem of obtaining the
asymptotic behaviour of a sequence satisfying a first order non-linear
difference equation. Since the equation was not linear, it was not possible
to solve it explicitly and that led to the question of obtaining its
asymptotic behaviour. It is natural to ask whether the method could be of
help in the case where second order differences arise. Of course, there will
be added difficulties. The next example, with all its details, illustrates
both the utility of the method and the difficulties that are bound to arise
as we try to obtain an argument in the discrete setting from the usually
easier argument in the continuous setting.

\begin{problem}
If $a_{n}$ is a sequence satisfying 
\[
a_{n+1}=a_{n}+\frac{1}{a_{n}}\sum_{k=1}^{n}a_{k},a_{1}=1, 
\]%
obtain the asymptotic behaviour of $a_{n}$.
\end{problem}

If we put $A_{n}=\sum_{k=1}^{n}a_{k}$, it becomes clear that this equation
involves first and second differences of $A_{n}$. Thus if we use the
dictionary with the function $F$ corresponding to $A_{n}$ and $f$
corresponding to $a_{n}$ or $F^{\prime }$, then the given difference
equation and its continuous analogue are 
\[
a_{n+1}=a_{n}+\frac{A_{n}}{a_{n}},F^{\prime \prime }(x)F^{\prime }(x)=F(x). 
\]%
In the continuous equation, multiplying by $F^{\prime }$ and integrating, we
are led, successively, to the equations 
\[
\frac{f^{3}(x)}{3}=\frac{F^{2}(x)}{2},F^{1/3}(x)=\frac{1}{3}(\frac{3}{2}%
)^{1/3}x,F(x)=\frac{x^{3}}{18},f(x)=\frac{x^{2}}{6}. 
\]%
Thus we guess that $a_{n}$ $\sim $ $\frac{n^{2}}{6}$, and we have to start
by obtaining a relationship between $2a_{n}^{3}$ $\ $and $3A_{n}^{2}$, as
suggested by the continuous case.

Let us see what can be done from very simple considerations. We have,
successively, $a_{n}\geq 1$, $a_{n+1}-a_{n}\geq 1,a_{n+1}\geq n+1,A_{n}\geq 
\frac{n(n+1)}{2}>\frac{n^{2}}{2}$. These inequalities, are quite far from
the expected result, but could be useful in the analysis, notably in
achieving a certain necessary matching of the indices as we shall see
presently. But at least they tell us that both $a_{n}$ and $A_{n}$ tend to $%
\infty $ as $n\rightarrow \infty ,$ and, in addition, that 
\[
\frac{nA_{n-1}}{A_{n+1}^{2}}\leq \frac{2}{n},n\geq 2. 
\]%
We shall use the two identities: 
\[
a_{n+1}^{3}-a_{n}^{3}=3a_{n}A_{n}+3\frac{A_{n}^{2}}{a_{n}}+\frac{A_{n}^{3}}{%
a_{n}^{3}},a_{n+1}^{3}-a_{n}^{3}=3a_{n+1}A_{n}+\frac{A_{n}^{3}}{a_{n}^{3}}. 
\]%
In the first identity, the positivity of all terms involved gives us%
\[
a_{n+1}^{3}-a_{n}^{3}\geq 3a_{n}A_{n},2(a_{n+1}^{3}-a_{n}^{3})\geq
3a_{n}A_{n}+3a_{n}A_{n-1}=3(A_{n}^{2}-A_{n-1}^{2}) 
\]%
which, upon summing, with $A_{0}=0$, leads to 
\[
2a_{n+1}^{3}\geq 3A_{n}^{2}+2a_{1}^{3}\geq 3A_{n}^{2}. 
\]%
In a similar manner, the second identity, leads to%
\[
2(a_{n+1}^{3}-a_{n}^{3})\leq 3a_{n+1}A_{n}+3a_{n+1}A_{n+1}+2\frac{A_{n}^{3}}{%
a_{n}^{3}}=3(A_{n+1}^{2}-A_{n}^{2})+2\frac{A_{n}^{3}}{a_{n}^{3}} 
\]%
and 
\[
2a_{n+1}^{3}-2a_{1}^{3}\leq 3A_{n+1}^{2}-3A_{1}^{2}+2\sum_{k=1}^{n}\frac{%
A_{k}^{3}}{a_{k}^{3}}. 
\]

\bigskip

Since $A_{n}^{1/3}\geq 1$, for $n\geq 1$, 
\[
\left( \frac{A_{n}}{a_{n}}\right) ^{3}=\left( 1+\frac{A_{n-1}}{a_{n}}\right)
^{3}\leq \left( 1+(\frac{2}{3})^{1/3}A_{n-1}^{1/3}\right) ^{3}\leq
8A_{n-1},n\geq 2. 
\]%
It follows then that 
\[
2a_{n+1}^{3}-2\leq 3A_{n+1}^{2}+16\sum_{k=2}^{n}A_{k-1}\leq 3A_{n+1}^{2}+%
\frac{32}{n}A_{n+1}^{2},n\geq 2. 
\]%
Thus we have, successively, 
\[
\lim \sup_{n\rightarrow \infty }\frac{2a_{n}^{3}}{3A_{n}^{2}}\leq
1,\lim_{n\rightarrow \infty }\frac{a_{n}}{A_{n}}=0,\lim_{n\rightarrow \infty
}\frac{A_{n+1}}{A_{n}}=1,\liminf_{n\rightarrow \infty }\frac{2a_{n}^{3}}{%
3A_{n}^{2}}\geq 1\text{,}\lim_{n\rightarrow \infty }\frac{2a_{n}^{3}}{%
3A_{n}^{2}}=1, 
\]%
and the first asymptotic relation is proved. It remains to obtain the
behaviour of $A_{n}$ and then $a_{n}.$To this end we cast our first
asymptotic result in the two equivalent forms%
\[
\lim_{n\rightarrow \infty }\frac{A_{n+1}-A_{n}}{A_{n+1}^{2/3}}=\left( \frac{3%
}{2}\right) ^{1/3},\lim_{n\rightarrow \infty }\frac{A_{n+1}-A_{n}}{%
A_{n}^{2/3}}=\left( \frac{3}{2}\right) ^{1/3}. 
\]%
Next, a use of the inequalities%
\[
\frac{1}{3}\cdot \frac{A_{n+1}-A_{n}}{A_{n+1}^{2/3}}\leq \frac{1}{3}%
\int_{A_{n}}^{A_{n+1}}x^{-2/3}dx\leq \frac{1}{3}\cdot \frac{A_{n+1}-A_{n}}{%
A_{n}^{2/3}}, 
\]%
along with the abelian result mentioned previously, readily leads to 
\[
\lim_{n\rightarrow \infty }\frac{A_{n+1}^{1/3}-1}{n}=\frac{1}{3}\cdot \left( 
\frac{3}{2}\right) ^{1/3}. 
\]%
Finally we have 
\[
A_{n}\text{ }\sim \text{ }\frac{1}{18}n^{3},a_{n}\text{ }\sim \text{ }\frac{1%
}{6}n^{2},n\rightarrow \infty . 
\]

\section{Tauberian results}

Suppose $T$ is a transform, whose exact form need not concern us here, that
takes sequences into sequences, and we are given the existence of $%
\lim_{n\rightarrow \infty }Ta_{n}$. It is often necessary to find the
asymptotic behaviour of the sequence $a_{n}$ . One of the difficulties
encountered in such problems is that it appears that very little is given,
and one doesn't have a clue as to how to start an attack on such a problem.
But if the problem admits of a translation into a continuous analogue, then
a line of attack might be suggested by the corresponding solution of the
continuous problem. Before we present an illustration of this, it will be
useful to have at hand the following well-known abelian result on sequences.

\textit{If }$a_{n}$\textit{\ is a real or complex sequence, and }$%
a_{n}\rightarrow a$\textit{, as }$n\rightarrow \infty $\textit{, then }%
\[
\frac{a_{1}+\cdot \cdot \cdot +a_{n}}{n}\rightarrow a,n\rightarrow \infty . 
\]%
Our next example illustrates the use of the method in a Tauberian problem.

\begin{problem}
\textit{If }$a_{n}$\textit{\ is a sequence of positive real numbers, and }%
\[
\lim_{n\rightarrow \infty }a_{n}\sum_{k=1}^{n}a_{k}^{2}=1\text{,} 
\]%
\textit{determine the order of growth of }$a_{n}$\textit{.\ \ \ \ \ }
\end{problem}

We employ the dictionary to move from the given problem to its continuous
analogue. So we have a positive function $f$ satisfying%
\[
\lim_{x\rightarrow \infty }f(x)\int_{1}^{x}f^{2}(t)dt=1\text{.} 
\]%
If we recast this in the equivalent form 
\[
\lim_{x\rightarrow \infty }f^{2}(x)\left( \int_{1}^{x}f^{2}(t)dt\right)
^{2}=1, 
\]%
then with the introduction of $F(x)=$\ \ \ $\int_{1}^{x}f^{2}(t)dt$\ $,$ we
have that $\lim_{x\rightarrow \infty }F^{\prime }(x)F^{2}(x)=1$, which, by
L'Hospital's rule \cite{Rudin}, implies that $\lim_{x\rightarrow \infty }%
\frac{F^{3}(x)}{3x}=1$, so that $\lim_{x\rightarrow \infty }\sqrt[3]{3x}%
f(x)=1$. So we guess that our sequence $a_{n}$ satisfies $\lim_{n\rightarrow
\infty }$\ \ \ \ $\sqrt[3]{3n}$\ $a_{n}=1$, i.e. the sequence decays to zero
like $\frac{1}{\ \sqrt[3]{3n}}$.\ The continuous analogue also suggests a
line of attack: introduce the sums $A_{n}=\sum_{k=1}^{n}a_{k}^{2}$, and look
at $A_{n}^{3}$.\ \ \ Before proceeding any further, let us note that, since
the method anticipates that the sequence $a_{n}$ decays to zero like a power
of $n$, we should see if we can first establish the weaker result that the
sequence $a_{n}$ does actually tend to $0.$ To this end note that, if the
series $\sum_{k=1}^{\infty }a_{k}^{2}$ converges then $\lim_{n\rightarrow
\infty }a_{n}A_{n}=0$ contrary to the given hypothesis. Thus the series
diverges, and then the sequence $A_{n}$ being monotone increasing and
unbounded must tend to $\infty $, and so $\lim_{n\rightarrow \infty
}a_{n}=\lim_{n\rightarrow \infty }a_{n}A_{n}\cdot \frac{1}{A_{n}}=0$. So $%
a_{n}\rightarrow 0$, and we can proceed to investigate its order of decay to 
$0$. Motivated by the continuous analogue we compute%
\[
A_{n+1}^{3}-A_{n}^{3}=a_{n+1}^{2}\left(
A_{n+1}^{2}+A_{n+1}A_{n}+A_{n}^{2}\right) . 
\]%
We are given that $\lim_{n\rightarrow \infty }a_{n}A_{n}=1$, so that we only
need to find the limit 
\[
\lim_{n\rightarrow \infty }a_{n+1}A_{n}. 
\]%
Since $a_{n+1}A_{n}=a_{n+1}A_{n+1}-a_{n+1}^{3}$\ $,$ we also obtain that \ \
\ $\lim_{n\rightarrow \infty }a_{n+1}A_{n}=1$, so that $\lim_{n\rightarrow
\infty }$\ $\left( \ \ A_{n+1}^{3}-A_{n}^{3}\ \right) =3$. But then it
follows, by the abelian result mentioned earlier, that 
\[
\lim_{n\rightarrow \infty }\frac{A_{n+1}^{3}-A_{1}^{3}}{n}%
=\lim_{n\rightarrow \infty }\frac{\sum_{k=1}^{n}\left( \ \
A_{k+1}^{3}-A_{k}^{3}\ \right) }{n}=3. 
\]%
This gives $\lim_{n\rightarrow \infty }\frac{A_{n}^{3}}{n}=3$, and we
conclude that \ \ \ \ $\lim_{n\rightarrow \infty }$\ \ \ \ $\sqrt[3]{3n}$\ $%
a_{n}=1$, as expected.

The reader will have no difficulty in formulating a generalization of this
if we are given that $a_{n}>0$, and 
\[
\lim_{n\rightarrow \infty }a_{n}^{p}\sum_{k=1}^{n}a_{k}^{q}=1, 
\]%
where $p,q$ are positive integers.

Well, whether $p,q$ are positive integers or just positive real numbers, the
continuous analogue will be the problem

\[
\lim_{x\rightarrow \infty }f^{p}(x)\int_{1}^{x}f^{q}(t)dt=1, 
\]%
which, when recast in the equivalent form \ \ \ \ \ \ 

\[
\lim_{x\rightarrow \infty }F^{\prime
}(x)F^{q/p}(x)=1,F(x)=\int_{1}^{x}f^{q}(t)dt, 
\]%
tells us that $\lim_{x\rightarrow \infty }$\ $\frac{F^{\frac{q}{p}+1}(x)}{%
\left( \frac{q}{p}+1\right) x}=1$, leads us to guess the appropriate decay
of the sequence $a_{n}$, and suggests that we start by putting $%
A_{n}=\sum_{k=1}^{n}a_{k}^{q}$, and compute first differences of the $A_{n}^{%
\frac{q}{p}+1}$.\ But here, when $\frac{q}{p}+1$ is not a positive
integer,there is no simple factorization of the difference. To proceed, we
have to appeal to the continuous setting. Indeed, using that the derivative
of $t^{\frac{q}{p}+1}$ is $\left( \frac{q}{p}+1\right) t^{\frac{q}{p}},$ we
can write \ \ \ \ \ \ \ \ \ \ \ \ \ \ \ \ \ \ \ \ \ \ \ \ \ \ \ \ \ \ \ \ \
\ \ \ \ \ \ \ \ \ \ \ \ \ \ \ \ \ \ \ \ \ \ \ \ \ \ \ \ \ \ \ \ \ \ \ \ \ \
\ \ \ \ \ \ \ \ \ \ \ \ \ \ \ \ \ \ \ \ \ \ \ \ \ \ \ \ \ \ \ \ \ \ \ \ \ \
\ \ \ \ \ \ \ \ \ \ \ \ \ \ \ \ \ \ \ \ \ \ \ \ \ \ \ \ \ \ \ \ \ \ \ \ \ \
\ \ \ \ \ \ \ \ \ \ \ \ \ \ \ \ \ \ \ \ \ \ \ \ \ \ \ \ \ \ \ \ \ \ \ \ \ \
\ \ \ \ \ \ \ \ \ \ \ \ \ \ \ \ \ \ \ \ \ \ \ \ \ \ \ \ \ \ \ \ \ \ \ \ \ \
\ \ \ \ \ \ \ \ \ \ \ \ \ \ \ \ \ \ \ \ \ \ \ \ \ \ \ \ \ \ \ \ \ \ \ \ \ \
\ \ \ \ \ \ \ \ \ \ \ \ \ \ \ \ \ \ \ \ \ \ \ \ \ \ \ \ \ \ \ \ \ \ \ \ \ \
\ \ \ \ \ \ \ \ \ \ \ \ \ \ \ \ \ \ \ \ \ \ \ \ \ \ \ \ \ \ \ \ \ \ \ \ \ \
\ \ \ \ \ \ \ \ \ \ \ \ \ \ \ \ \ \ \ \ \ \ \ \ \ \ \ \ \ \ \ \ \ \ \ \ \ \
\ \ \ \ \ \ \ \ \ \ \ \ \ \ \ \ \ \ \ \ \ \ \ \ \ \ \ \ \ \ \ \ \ \ \ \ \ \
\ \ \ \ \ \ \ \ \ \ \ \ \ \ \ \ \ \ \ \ \ \ \ \ \ \ \ \ \ \ \ \ \ \ \ \ \ \
\ \ \ \ \ \ \ \ \ \ \ \ \ \ \ \ \ \ \ \ \ \ \ \ \ \ \ \ \ \ \ \ \ \ \ \ \ \
\ \ \ \ \ \ \ \ \ \ \ \ \ \ \ \ \ \ \ \ \ \ \ \ \ \ \ \ \ \ \ \ \ \ \ \ \ \
\ \ \ \ \ \ \ \ \ \ \ \ \ \ \ \ \ \ \ \ \ \ \ \ \ \ \ \ \ \ \ \ \ \ \ \ \ \
\ \ \ \ \ \ \ \ \ \ \ \ \ \ \ \ \ \ \ \ \ \ \ \ \ \ \ \ \ \ \ \ \ \ \ \ \ \
\ \ \ \ \ \ \ \ \ \ \ \ \ \ \ \ \ \ \ \ \ \ \ \ \ \ \ \ \ \ \ \ \ \ \ \ \ \
\ \ \ \ \ \ \ \ \ \ \ \ \ \ \ \ \ \ \ \ \ \ \ \ \ \ \ \ \ \ \ \ \ \ \ \ \ \
\ \ \ \ \ \ \ \ \ \ \ \ \ \ \ \ \ \ \ \ \ \ \ \ \ \ \ \ \ \ \ \ \ \ \ \ \ \
\ \ \ \ \ \ \ \ \ \ \ \ \ \ \ \ \ \ \ \ \ \ \ \ \ \ \ \ \ \ \ \ \ \ \ \ \ \
\ \ \ \ \ \ \ \ \ \ \ \ \ \ \ \ \ \ \ \ \ \ \ \ \ \ \ \ \ \ \ \ \ \ \ \ \ \
\ \ \ \ \ \ \ \ \ \ \ \ \ \ \ \ \ \ \ \ \ \ \ \ \ \ \ \ \ \ \ \ \ \ \ \ \ \
\ \ \ \ \ \ \ \ \ \ \ \ \ \ \ \ \ \ \ \ \ \ \ \ \ \ \ \ \ \ \ \ \ \ \ \ \ \
\ \ \ \ \ \ \ \ \ \ \ \ \ \ \ \ \ \ \ \ \ \ \ \ \ \ \ \ \ \ \ \ \ \ \ \ \ \
\ \ \ \ \ \ \ \ \ \ \ \ \ \ \ \ \ \ \ \ \ \ \ \ \ \ \ \ \ \ \ \ \ \ \ \ \ \
\ \ \ \ \ \ \ \ \ \ \ \ \ \ \ \ \ \ \ \ \ \ \ \ \ \ \ \ \ \ \ \ \ \ \ \ \ \
\ \ \ \ \ \ \ \ \ \ \ \ \ \ \ \ \ \ \ \ \ \ \ \ \ \ \ \ \ \ \ \ \ \ \ \ \ \
\ \ \ \ \ \ \ \ \ \ \ \ \ \ \ \ \ \ \ \ \ \ \ \ \ \ \ \ \ \ \ \ \ \ \ \ \ \
\ \ \ \ \ \ \ \ \ \ \ \ \ \ \ \ \ \ \ \ \ \ \ \ \ \ \ \ \ \ \ \ \ \ \ \ \ \
\ \ \ \ \ \ \ \ \ \ \ \ \ \ \ \ \ \ \ \ \ \ \ \ \ \ \ \ \ \ \ \ \ \ \ \ \ \
\ \ \ \ \ \ \ \ \ \ \ \ \ \ \ \ \ \ \ \ \ \ \ \ \ \ \ \ \ \ \ \ \ \ \ \ \ \
\ \ \ \ \ \ \ \ \ \ \ \ \ \ \ \ \ \ \ \ \ \ \ \ \ \ \ \ \ \ \ \ \ \ \ \ \ \
\ \ \ \ \ \ \ \ \ \ \ \ \ \ \ \ \ \ \ \ \ \ \ \ \ \ \ \ \ \ \ \ \ \ \ \ \ \
\ \ \ \ \ \ \ \ \ \ \ \ \ \ \ \ \ \ \ \ \ \ \ \ \ \ \ \ \ \ \ \ \ \ \ \ \ \
\ \ \ \ \ \ \ \ \ \ \ \ \ \ \ \ \ \ \ \ \ \ \ \ \ \ \ \ \ \ \ \ \ \ \ \ \ \
\ \ \ \ \ \ \ \ \ \ \ \ \ \ \ \ \ \ \ \ \ \ \ \ \ \ \ \ \ \ \ \ \ \ \ \ \ \
\ \ \ \ \ \ \ \ \ \ \ \ \ \ \ \ \ \ \ \ \ \ \ \ \ \ \ \ \ \ \ \ \ \ \ \ \ \
\ \ \ \ \ \ \ \ \ \ \ \ \ \ \ \ \ \ \ \ \ \ \ \ \ \ \ \ \ \ \ \ \ \ \ \ \ \
\ \ \ \ \ \ \ \ \ \ \ \ \ \ \ \ \ \ \ \ \ \ \ \ \ \ \ \ \ \ \ \ \ \ \ \ \ \
\ \ \ \ \ \ \ \ \ \ \ \ \ \ \ \ \ \ \ \ \ \ \ \ \ \ \ \ \ \ \ \ \ \ \ \ \ \
\ \ \ \ \ \ \ \ \ \ \ \ \ \ \ \ \ \ \ \ \ \ \ \ \ \ \ \ \ \ \ \ \ \ \ \ \ \
\ \ \ \ \ \ \ \ \ \ \ \ \ \ \ \ \ \ \ \ \ \ \ \ \ \ \ \ \ \ \ \ \ \ \ \ \ \
\ \ \ \ \ \ \ \ \ \ \ \ \ \ \ \ \ \ \ \ \ \ \ \ \ \ \ \ \ \ \ \ \ \ \ \ \ \
\ \ \ \ \ \ \ \ \ \ \ \ \ \ \ \ \ \ \ \ \ \ \ \ \ \ \ \ \ \ \ \ \ \ \ \ \ \
\ \ \ \ \ \ \ \ \ \ \ \ \ \ \ \ \ \ \ \ \ \ \ \ \ \ \ \ \ \ \ \ \ \ \ \ \ \
\ \ \ \ \ \ \ \ \ \ \ \ \ \ \ \ \ \ \ \ \ \ \ \ \ \ \ \ \ \ \ \ \ \ \ \ \ \
\ \ \ \ \ \ \ \ \ \ \ \ \ \ \ \ \ \ \ \ \ \ \ \ \ \ \ \ \ \ \ \ \ \ \ \ \ \
\ \ \ \ \ \ \ \ \ \ \ \ \ \ \ \ \ \ \ \ \ \ \ \ \ \ \ \ \ \ \ \ \ \ \ \ \ \
\ \ \ \ \ \ \ \ \ \ \ \ \ \ \ \ \ \ \ \ \ \ \ \ \ \ \ 
\[
A_{n+1}^{\frac{q}{p}+1}-A_{n}^{\frac{q}{p}+1}=\int_{A_{n}}^{A_{n+1}}\left( 
\frac{q}{p}+1\right) t^{\frac{q}{p}}dt. 
\]%
Once we have this we obtain immediately a double inequality that replaces
the identity in the case of cubes:%
\[
\left( \frac{q}{p}+1\right) A_{n}^{\frac{q}{p}}a_{n+1}^{q}\leq A_{n+1}^{%
\frac{q}{p}+1}-A_{n}^{\frac{q}{p}+1}\leq \left( \frac{q}{p}+1\right)
A_{n+1}^{\frac{q}{p}}(A_{n+1}-A_{n})=\left( \frac{q}{p}+1\right) A_{n+1}^{%
\frac{q}{p}}a_{n+1}^{q}. 
\]%
On the right-hand side we have $A_{n+1}^{\frac{q}{p}}a_{n+1}^{q}=\left(
A_{n+1}a_{n+1}^{p}\right) ^{\frac{q}{p}}\rightarrow 1$ by hypothesis. On the
left-hand side we have $A_{n}^{\frac{q}{p}}a_{n+1}^{q}=\left(
A_{n}a_{n+1}^{p}\right) ^{\frac{q}{p}}=\left(
A_{n+1}a_{n+1}^{p}-a_{n+1}^{p+1}\right) ^{\frac{q}{p}}\rightarrow 1$. We can
thus proceed as before and obtain that 
\[
\lim_{n\rightarrow \infty }\frac{A_{n}^{\frac{q}{p}+1}}{n}=\left( \frac{q}{p}%
+1\right) \text{,} 
\]%
thereby obtaining the expected decay of $A_{n}$ , and so of $a_{n}$.

\section{Coupled systems}

For problems involving a complex sequence $z_{n}=a_{n}+ib_{n}$, the first
thing that comes to mind is to separate into real and imaginary parts, to
obtain a system of two real problems. As a simple example, the problem of
the asymptotic behaviour of 
\[
z_{n+1}=z_{n}+\frac{i}{nz_{n}},z_{1}=1+i, 
\]%
is equivalent to that of the coupled real system%
\[
a_{n+1}=a_{n}+\frac{b_{n}}{n(a_{n}^{2}+b_{n}^{2})},b_{n+1}=b_{n}+\frac{a_{n}%
}{n(a_{n}^{2}+b_{n}^{2})},a_{1}=b_{1}=1. 
\]%
Such systems are expected to be much more involved. But the method of the
continuous analogue, can still be utilized as we hope to demonstrate in the
result that follows. Exploration of the many possible generalizations is
left to the reader.

Let $a_{n}$ and $b_{n}$ be the two sequences defined by the coupled system%
\[
a_{n+1}-a_{n}=\frac{1}{b_{n}^{2}},b_{n+1}-b_{n}=\frac{1}{a_{n}^{2}}%
,a_{1}>0,b_{1}>0. 
\]%
Our purpose here is to study the asymptotic behavior of each of these two
sequences.

The continuous analogue, ignoring the initial conditions, consists of two
positive functions satisfying the equations%
\[
f^{\prime }(t)=\frac{1}{g^{2}(t)},g^{\prime }(t)=\frac{1}{f^{2}(t)}, 
\]%
from which the one particular solution $f(t)=g(t)=(3t)^{1/3}$ is easily
derived. In particular $f^{3}(t)g^{3}(t)=9t^{2}.$ So we guess that the
sequences satisfy the asymptotic relations $a_{n}\thicksim
(3n)^{1/3},b_{n}\thicksim (3n)^{1/3},n\rightarrow \infty .$ We also see that
we might start with $a_{n}^{3}$ and $b_{n}^{3}$. Any inequality between
these two will lead to a decoupling of the system and allow us to work
separately on each sequence. We start by some straightforward observations:

\textit{The sequences }$a_{n}$\textit{\ and }$b_{n}$\textit{\ are positive
increasing sequences. Denote their limits by }$a$\textit{\ and }$b$\textit{\
respectively. Then }$0<a\leq \infty ,$\textit{\ and }$0<b\leq \infty .$

\textit{If }$0<a<\infty $\textit{, then }$\frac{b_{n}}{n}\rightarrow \frac{1%
}{a^{2}},$\textit{\ and \ }$a-a_{n}\thicksim \frac{a^{4}}{n}$\textit{, as }$%
n\rightarrow \infty .$\textit{\ If }$0<b<\infty $\textit{, then }$\frac{a_{n}%
}{n}\rightarrow \frac{1}{b^{2}},$\textit{\ and }$b-b_{n}\thicksim \frac{b^{4}%
}{n}$\textit{, as }$n\rightarrow \infty .$\textit{\ In particular, the
limits }$a$\textit{\ and }$b$\textit{\ cannot both be finite.}

If both $a$ and $b$ are infinite, then the asymptotic behavior of $a_{n}$
and $b_{n}$ is described in the following result:

\begin{theorem}
Let $a_{n}$ and $b_{n}$ be the two sequences defined above, let $a$ and $b$
be their limits, and assume that $a=b=\infty .$ Put 
\[
\alpha =\limsup_{n\rightarrow \infty }n^{-\frac{1}{3}}a_{n},\alpha ^{\prime
}=\liminf_{n\rightarrow \infty }n^{-\frac{1}{3}}a_{n},\beta
=\limsup_{n\rightarrow \infty }n^{-\frac{1}{3}}b_{n},\beta ^{\prime
}=\liminf_{n\rightarrow \infty }n^{-\frac{1}{3}}b_{n}. 
\]%
Then $\alpha ^{\prime }=\alpha =\beta =\beta ^{\prime }=3^{1/3}$, and 
\[
a_{n}\thicksim (3n)^{1/3},b_{n}\thicksim (3n)^{1/3},n\rightarrow \infty . 
\]
\end{theorem}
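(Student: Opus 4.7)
The continuous analogue already pointed at the particular solution $f(t)=g(t)=(3t)^{1/3}$, so the plan is to work with the cubes $a_n^{3},b_n^{3}$ and, at the heart of the argument, to prove that the ratio $r_n:=a_n/b_n$ tends to $1$; the four equalities of the theorem will then drop out of Stolz--Ces\`aro.

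I would first reduce to the case $a_n\ge b_n$ for every $n$: if $a_n\ge b_n$ then $1/b_n^{2}\ge 1/a_n^{2}$, so $a_{n+1}-a_n\ge b_{n+1}-b_n$, and the ordering is preserved by induction. The explicit formula
\[
\frac{r_{n+1}}{r_n}=\frac{1+1/(a_n b_n^{2})}{1+1/(b_n a_n^{2})}
\]
shows that $r_n$ is non-decreasing, so $r:=\lim_n r_n$ exists in $[1,\infty]$. Since $a_n,b_n\to\infty$ forces $a_{n+1}/a_n\to 1$ and $b_{n+1}/b_n\to 1$, the factorization of $a_{n+1}^{3}-a_n^{3}$ and its $b$-analogue yield the key asymptotic relations
\[
a_{n+1}^{3}-a_n^{3}=(3+o(1))\,r_n^{2},\qquad b_{n+1}^{3}-b_n^{3}=(3+o(1))\,r_n^{-2}.
\]

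The central task is to rule out $r=\infty$ and to pin down $r=1$. If $r<\infty$, Stolz--Ces\`aro applied to the two displays above gives $a_n^{3}/n\to 3r^{2}$ and $b_n^{3}/n\to 3/r^{2}$, so $(a_n/b_n)^{3}\to r^{4}$; but $a_n/b_n\to r$, whence $r^{3}=r^{4}$ and therefore $r=1$. If $r=\infty$, I split according to whether $S_n:=\sum_{k\le n}1/r_k^{2}$ is bounded: if it is, the second display forces $b_n^{3}$ to be bounded, contradicting $b_n\to\infty$. Otherwise $S_n\to\infty$, and a direct expansion gives $a_{n+1}b_{n+1}^{2}-a_n b_n^{2}=1+2/r_n+o(1)\to 1$, so Stolz--Ces\`aro yields $a_n b_n^{2}\sim n$, i.e.\ $b_n^{3}\sim n/r_n$. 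Matching this with $b_n^{3}\sim 3S_n$ (again by Stolz--Ces\`aro) gives $S_n\sim n/(3r_n)$; differencing this and using $r_{n+1}/r_n\to 1$ then yields $r_{n+1}-r_n\sim r_n/n$, whence $r_n\sim Cn$ for some $C>0$, and therefore $b_n^{3}\to 1/C$, once more contradicting $b_n\to\infty$.

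Once $r=1$ is established, the $r<\infty$ computation yields $a_n^{3}/n\to 3$ and $b_n^{3}/n\to 3$, which is exactly $\alpha=\alpha'=\beta=\beta'=3^{1/3}$ and $a_n,b_n\sim(3n)^{1/3}$. The main obstacle is the sub-case $r=\infty$, $S_n\to\infty$: turning the coupled recursion into the clean relation $r_{n+1}-r_n\sim r_n/n$ needed to close the contradiction calls for some careful asymptotic bookkeeping, and it is here that the continuous guidance -- formally the ODE identity $(Sr)'=1/r+1/3\to 1/3$ for the continuous analogue, which translates to $a_n b_n^{2}\sim n$ -- is what suggests the right quantity to track in the discrete case.
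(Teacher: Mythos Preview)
Your ratio approach is genuinely different from the paper's and the finite--$r$ case is handled cleanly: once $r_n\to r\in[1,\infty)$, Stolz--Ces\`aro on the cubed differences gives $a_n^3/n\to 3r^2$ and $b_n^3/n\to 3r^{-2}$, and $r^3=r^4$ forces $r=1$. The paper never introduces $r_n$; instead it sums $a_{n+1}^3-a_n^3\ge 3a_n^2/b_n^2$ and the symmetric inequality, multiplies, and applies Cauchy--Schwarz to obtain the \emph{uncertainty inequality} $a_{n+1}^3b_{n+1}^3\ge 9n^2$. That single product bound decouples the system: it yields $a_{n+1}^{-1}-a_{n+2}^{-1}\le 9^{-2/3}n^{-4/3}$, and a tail sum (using only the hypothesis $a_m\to\infty$) gives $a_{n+1}\ge(3(n-1))^{1/3}$ directly, after which the upper bound follows by feeding this back into $a_{n+1}-a_n=1/b_n^2$. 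No case split on the size of $r$ is needed.

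Your $r=\infty$, $S_n\to\infty$ subcase, however, does not close. Two issues. First, you cannot obtain $r_{n+1}-r_n\sim r_n/n$ by ``differencing'' the relation $S_n\sim n/(3r_n)$: asymptotic equivalence does not pass to differences. (The relation \emph{is} true, but for a different reason: from $a_nb_n^2\sim n$ and $a_n^2b_n=r_n\cdot a_nb_n^2$ one gets $r_{n+1}/r_n=(1+1/u_n)/(1+1/(r_nu_n))$ with $u_n=a_nb_n^2\sim n$, hence $r_{n+1}/r_n-1\sim 1/n$.) Second, and this is the real gap: $r_{n+1}-r_n\sim r_n/n$ does \emph{not} imply $r_n\sim Cn$. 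Taking logarithms and summing gives only
\[
\ln r_n=\sum_{k<n}\frac{1+o(1)}{k}=\ln n+o(\ln n),
\]
i.e.\ $r_n=n\cdot e^{o(\ln n)}$. Then $b_n^3\sim n/r_n=e^{-o(\ln n)}$, and this can perfectly well tend to $\infty$ (take $r_n\sim n/\ln n$, say), so no contradiction with $b_n\to\infty$ is reached. You need either a sharper control of the $o(1)$ term in $r_{n+1}/r_n-1=(1+o(1))/n$, or---simpler---to borrow the paper's Cauchy--Schwarz step to rule out $r=\infty$ up front, after which your elegant $r<\infty$ argument finishes the job.
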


\begin{proof}
As suggested by the continuous case we consider the cubic powers of the two
sequences with a view to obtain an inequality for their product.\bigskip\
The monotonicity of $a_{n}$ and $b_{n}$, gives the elementary
inequalities\bigskip 
\[
a_{n+1}^{3}-a_{n}^{3}\geq 3a_{n}^{2}(a_{n+1}-a_{n})=3\frac{a_{n}^{2}}{%
b_{n}^{2}},b_{n+1}^{3}-b_{n}^{3}\geq 3b_{n}^{2}(b_{n+1}-b_{n})=3\frac{%
b_{n}^{2}}{a_{n}^{2}}. 
\]%
\bigskip Upon summation and multiplication, these in turn imply 
\[
\left( a_{n+1}^{3}-a_{1}^{3}\right) \left( b_{n+1}^{3}-b_{1}^{3}\right) \geq
9\left( \sum_{k=1}^{n}\frac{a_{k}^{2}}{b_{k}^{2}}\right) \left(
\sum_{k=1}^{n}\frac{b_{k}^{2}}{a_{k}^{2}}\right) \geq 9n^{2},n\geq 1, 
\]%
giving the \ "uncertainty" inequality 
\[
a_{n+1}^{3}\cdot b_{n+1}^{3}\geq 9n^{2},n\geq 1. 
\]%
Of course, the matching is not perfect, but this is sort of in the nature of
such problems. Rewrite the uncertainty inequality in the equivalent form \ \ 
\[
a_{n+1}^{-2}\cdot b_{n+1}^{-2}\leq 9^{-\frac{2}{3}}n^{-\frac{4}{3}},n\geq 1, 
\]%
and use it to get\ 
\[
a_{n+1}^{-1}-a_{n+2}^{-1}=\int_{a_{n+1}}^{a_{n+2}}t^{-2}dt\leq
a_{n+1}^{-2}(a_{n+2}-a_{n+1})\leq 9^{-\frac{2}{3}}n^{-\frac{4}{3}}. 
\]%
Now sum this up from $n\geq 2$, to $m\geq n+2$ to obtain%
\[
a_{n+1}^{-1}-a_{m}^{-1}\leq 9^{-\frac{2}{3}}\sum_{k=n}^{m-2}k^{-\frac{4}{3}%
}. 
\]%
\bigskip \bigskip We now let $m\rightarrow \infty $, and use the hypothesis
that $a_{m}\rightarrow \infty $ as $m\rightarrow \infty $ to obtain\bigskip 
\[
a_{n+1}^{-1}\leq 9^{-\frac{2}{3}}\sum_{k=n}^{\infty }k^{-\frac{4}{3}}\leq
3\cdot 9^{-\frac{2}{3}}(n-1)^{-\frac{1}{3}}=\frac{1}{\sqrt[3]{3(n-1)}},n\geq
2. 
\]%
A similar result holds for $b_{n}$ and we conclude that 
\[
\liminf_{n\rightarrow \infty }n^{-\frac{1}{3}}a_{n}\geq
1,\liminf_{n\rightarrow \infty }n^{-\frac{1}{3}}b_{n}\geq 1. 
\]%
To get inequalities in the opposite direction, we return to the defining
equations of the sequences and incorporate in them these new inequalities.
Thus, for the first sequence, we get 
\[
a_{n+1}-a_{n}\leq \frac{1}{\left( 3(n-1)\right) ^{2/3}},n\geq 2\text{, } 
\]%
which upon summation and estimation of the resulting sum with the
corresponding integral, leads to 
\[
a_{n+1}\leq a_{2}+c+\left( 3(n-1)\right) ^{1/3},n\leq 2, 
\]%
where $c$ is a constant. It follows that $\limsup_{n\rightarrow \infty }n^{-%
\frac{1}{3}}a_{n}\leq 1$, and hence the limit exists and equals $1$. The
same applies to the sequence $b_{n}.$ This completes the proof of the
theorem.
\end{proof}

\section{Comparison of sequences}

The constant sequence $x_{n}=1$, and the sequence $y_{n}=\sin ^{2}n$, are
both positive and bounded, with $y_{n}\leq x_{n}$, for all $n$. Is there a
way of quantifying, how much smaller is $y_{n}?$ We propose to answer this
question as follows: introduce two sequences $a_{n},b_{n}$ defined by 
\[
a_{n+1}=a_{n}+\frac{x_{n}}{a_{n}},b_{n+1}=b_{n}+\frac{y_{n}}{b_{n}}%
,a_{1}=b_{1}=1. 
\]%
Then our method here may be used to obtain the asymptotic relations%
\[
a_{n}\sim \sqrt{2n},b_{n}\sim \sqrt{n},n\rightarrow \infty . 
\]%
Thus the "excess" in magnitude of $x_{n}$ over $y_{n}$ is quantified by the
constant $\sqrt{2}$ present in these asymptotic relations.

\section{Beyond the first term}

So far all illustrations involved what is called the first term describing
the asymptotic behavior of a given sequence. It is often desirable, and very
useful, to find what is called the second term in the asmptotic expansion of
the sequence. Of course we have not defined what an asymptotic expansion is,
but we hope it will be clear from the next discussion what is intended. Let
us go directly to an example, which we think , is quite suitable for our
purpose.

\begin{example}
The sequence $x_{n}$ is defined by the recurrence%
\[
x_{n+1}=x_{n}-x_{n}^{2},0<x_{1}<1. 
\]%
Obtain the asymptotic behaviour of $x_{n}$.
\end{example}

First of all, it is easy to see that this sequence is positive and
decreasing. That is $0<x_{n+1}<x_{n}<1$. So it is convergent, and it is
immediate that its limit is $0$.The corresponding continuous equation, $%
f^{\prime }(t)=-f^{2}(t)$ has $f(t)=t^{-1}$ as one solution and we guess
that $x_{n}$ $\sim \frac{1}{n}$, or, put differently, that $%
nx_{n}-1\rightarrow 0$ as $n\rightarrow \infty .$ If this can be proved,
then we can go one step further and ask about the rate at which $nx_{n}-1$
decays to $0$. We trust that the reader will be able to show that $x_{n}\sim 
\frac{1}{n}$, and, by induction, that $nx_{n}<1$ for all $n.$So let us
attend to the question of the rate of decay of $nx_{n}-1$. In order to apply
our method, we need to put $a_{n}=nx_{n}$, and find a new equation satisfied
by $a_{n}$ and then apply the continuous method to this equation. Perhaps
the simplest way to do this is to multiply the given recurrence equation for 
$x_{n}$ by $n+1$ and then write everything in terms of $a_{n}.$ We are thus
led to the problem of the asymptotic behaviour of $a_{n}$ where 
\[
a_{n+1}=a_{n}+\frac{a_{n}}{n}-\frac{(n+1)a_{n}^{2}}{n^{2}},0<a_{1}<1. 
\]%
The continuous analogue is a positive function $f$ satisfying 
\[
f^{\prime }(t)=\frac{f(t)}{t}-(t+1)\left( \frac{f(t)}{t}\right) ^{2}. 
\]%
The introduction of $g(t)=\frac{f(t)}{t}$, leads to the differential equation%
\[
g^{\prime }(t)=-\frac{t+1}{t}g^{2}(t), 
\]%
which is solved by $g(t)=\frac{1}{t+\ln t}$, so that $f(t)=\frac{t}{t+\ln t}$%
. Thus $1-f(t)=\frac{\ln t}{t+\ln t}$, and we guess that 
\[
1-nx_{n}\sim \frac{\ln n}{n+\ln n}\sim \frac{\ln n}{n},n\rightarrow \infty . 
\]

\end{document}